\title{\bf Littlewood--Paley--Stein Square Functions for the Fractional Discrete Laplacian on $\mathbb{Z}$}
\author{Huaiqian Li\footnote{Email: huaiqian.li@tju.edu.cn \quad Partially supported by the National Key R\&D Program of China (Grant No. 2022YFA1006000) and the National Natural Science Foundation of China (Grant No. 11831014).
}
\quad Liying Mu\footnote{Email: mly18822136120@163.com}
  \vspace{2mm}
\\
{\footnotesize Center for Applied Mathematics, Tianjin University, Tianjin 300072, P. R. China}
}
\date{}
\def\R{\mathbb{R}}
\def\E{\mathbb{E}}
\def\D{\mathrm{D}}
\def\Z{\mathbb{Z}}
\def\i{\mathrm{i}}
\def\d{\textup{d}}
\def\D{\textup{D}}
\def\U{\mathrm{U}}
\def\<{\langle}
\def\>{\rangle}
\def\Proof.{\noindent{\bf Proof. }}
\def\newdot{{\kern.8pt\cdot\kern.8pt}}
\newtheorem{theorem}{Theorem}[section]
\newtheorem{lemma}[theorem]{Lemma}
\newtheorem{corollary}[theorem]{Corollary}
\newtheorem{proposition}[theorem]{Proposition}
\theoremstyle{definition}\newtheorem{remark}[theorem]{Remark}
\begin{document}
\allowdisplaybreaks
\maketitle
\makeatletter 
\renewcommand\theequation{\thesection.\arabic{equation}}
\@addtoreset{equation}{section}
\makeatother 

\begin{abstract}
We investigate the boundedness of  ``vertical'' Littlewood--Paley--Stein square functions for the nonlocal fractional discrete Laplacian on the lattice $\mathbb{Z}$, where the underlying graphs are not locally finite. When $q\in[2,\infty)$, we prove the $l^q$ boundedness of the square function by exploring the corresponding Markov jump process and applying the martingale inequality. When $q\in (1,2]$, we consider a modified version of the square function and prove its $l^q$ boundedness through a careful in on the generalized carr\'{e} du champ operator. A counterexample is constructed to show that it is necessary to consider the modified version. Moreover, we extend the study to a class of nonlocal Schr\"{o}dinger operators for $q\in (1,2]$.
\end{abstract}

{\bf MSC 2020:} primary 60G51, 42B25; secondary 60J60, 60J74

{\bf Keywords:} Fractional discrete Laplacian; square function; Schr\"{o}dinger operator;  jump process

\section{Introduction}\hskip\parindent
In classical harmonic analysis, the Littlewood--Paley square function plays an important role in the study of the boundedness of Riesz transform, the boundedness of Fourier multipliers, the convergence of non-tangential maximal functions, and so on.  The pivotal status has led to the in-depth investigation of Littlewood--Paley square functions in different settings and for diverse objectives. However, when it comes to applications on partial differential equations for instance, what kind of square function we employ depends on the semigroup chosen. Consequently, it stimulates the intensive study of specific square functions associated with a given semigroup. See e.g. \cite{Coulhon-2003,Lohou1987,Stein-1970,Stein-1970-Topics,St1958}. Last but not the least, the probabilistic counterpart of square functions is the (predictable) quadratic variation of martingales, which have been also investigated a lot; refer to \cite{Sh2002,ShYo92,Ba86,Benn85,Mey1981,Varo1980,Mey1976} for instance.

The motivation of the present work is two fold. On the one hand, the boundedness of ``horizontal'' (i.e., derivative w.r.t. the time variable) Littlewood--Paley--Stein square function for discrete Laplacian was proved in some weight $l^p$ space over the one-dimensional lattice $\Z$ in \cite{ciaurri-2017}, and the regularity and extension problems of the fractional discrete Laplacian on $\Z$  was studied recently in \cite{ciaurri-2018}. On the other hand, the Littlewood--Paley--Stein square function for pure jump L\'{e}vy process on $\R^d$ was investigated in \cite{Ba-2016}, which was also extended recently to the more general setting of Dirichlet forms of pure jump type on metric measure spaces in \cite{LiWang16} and in the setting of nonlocal Schr\"{o}dinger type operators in \cite{Li-2020}; see also the very recent papers \cite{Li2021,LZ2022+} in the Dunkl setting where the corresponding jump process may not be a L\'{e}vy process.

So, it would be interesting to consider the ``vertical'' (i.e., derivative w.r.t. the space variable)  Littlewood--Paley--Stein square function for the fractional discrete Laplacian on lattices. In contrast to related results appeared in the literature, such as Dungey's consideration on uniformly locally finite graphs (see \cite{Dungey-2008}), the graphs underlying the Markov chain generated by the fractional discrete Laplacian on $\Z$ are locally infinite (see Remark \ref{rem-graph} for details).

\section{Preparations and main results}\hskip\parindent
In this section, we aim to present the main results. For this purpose, we shall introduce some notions and notations which will be frequently used below.

Let $\Z$ be the one-dimensional lattice endowed with the counting measure $\#$. Let $q\in[1,\infty]$. For convenience, we use $l^q$ to denote the standard space $l^q(\mathbb{Z},\#)$ with the norm
$$\Vert f\Vert_{q}:=\left(\sum_{x\in \mathbb{Z}}  |f(x)|^q\right)^\frac{1}{q},\quad 1\leq q<\infty,$$
and
$$\Vert f\Vert_{\infty}:=\sup_{x\in \mathbb{Z}}|f(x)|.$$
Note that for any $p\geq q\geq1$, we have $l^q\subset l^p\subset l^\infty$.

Recall that the discrete Laplacian on $\mathbb{Z}$, denoted by $\Delta$, is defined as
$$\Delta u (j)=u (j+1)-2 u (j)+u (j-1),\quad j\in\mathbb{Z},$$
for every function  $u:\mathbb{Z}\rightarrow \mathbb{R}$.

There are a couple of ways to introduce the fractional discrete Laplacian; see e.g. \textcolor{red}{\cite{Bendikov-2016,Cygan, Garo,SSV2012,Stinga-2,Sato2013,Stein-1970-Topics}}. Here we choose the semigroup approach; see e.g. \textcolor{red}{\cite{ciaurri-2018, Sato2013,Stein-1970-Topics}}. We always assume that $0 < s < 1$. For any suitable function $u:\mathbb{Z}\rightarrow \mathbb{R}$, define the fractional discrete Laplacian $L:=(-\Delta)^s$ on $\Z$ as
$$L u =\frac{1}{\Gamma(-s)} \int_{0}^{\infty}(e^{t \Delta} u-u)\, \frac{d t}{t^{1+s}},$$
where $\Gamma$ denotes the Gamma function and $s\Gamma(-s)=-\Gamma(1-s)$. Indeed, we have the following pointwise nonlocal formula, established recently in \cite[Theorem 1.1]{ciaurri-2018}, i.e., for every $u \in \mathcal{D}_s$,
\begin{equation}\label{point-formula}
L u(j)=\sum_{m \in \mathbb{Z}}\big(u(j)-u(m)\big) K_{s} (j-m),\quad j\in\Z,
\end{equation}
where
$$\mathcal{D}_s:=\left\{u:\mathbb{Z} \rightarrow \mathbb{R}:\ \sum_{m \in \mathbb{Z}} |u(m)|{(1+|m|)^{-(1 + 2 s)}}<\infty\right\},$$
and the kernel
$$ K_{s} (m):=
	\frac{4^{s} \Gamma(1 / 2+s)}{\sqrt{\pi}|\Gamma(-s)|} \cdot \frac{\Gamma(|m|-s)}{ \Gamma(|m|+1+s)},\quad m\in \mathbb{Z},$$
with the convention that $K_{s}(0):=0$; moreover, there exists a constant $c_s\in[1,\infty)$, depending on $s$, such that
\begin{equation}\label{kernel-bd}
\frac{c_{s}^{-1}}{|m|^{1+2 s}} \leq K_{s} (m) \leq \frac{c_{s}}{|m|^{1+2 s}},\quad m\in\Z\setminus \{0\}.
\end{equation}
We remark that $l^1\subset\mathcal{D}_s$ for every $s\in(0,1)$.

In the present situation, the operator $-L$ generates a Markov chain with state space $\Z$ and $L$ can be also written in the following way: for any $u\in\mathcal{D}_s$,
$$Lu(i)=\sum_{j\in\Z}\|K_s\|_{l^1}\big(\mathbbm{1}_{i,j}-p_{i,j}\big)u(j),\quad i\in\Z,$$
where $\mathbbm{1}_{i,j}=1$ if $i=j$ and $\mathbbm{1}_{i,j}=0$ otherwise, and
$$p_{i,j}:=\frac{1}{\|K_s\|_{l^1}}K_s(i-j),\quad i,j\in\Z.$$
Note that the one-step transition probabilities of the Markov chain is given by the infinite matrix $(p_{i,j})_{i,j\in\Z}$ such that, for each $(i,j)\in\Z^2$,  $p_{i,j}$ is the probability of jumping from the point $i$ to the point $j$ in the next step. See also \cite[Remark 1.2]{ciaurri-2018}.

As we have mentioned before that the graphs underlying the Markov chain generated by $-L$ with the state space $\Z$ are not locally finite.
\begin{remark}\label{rem-graph}
Let $s\in(0,1)$ and let the triple $(V,E,w)$ be the undirected weighted graph such that $V=\Z$ is the set of vertices and $E$ is the set of edges with edge weight $w_{i,j}:=K_{s} (i-j)$ for every $i,j\in\Z$. Assume that the weight on the vertices is constant. Then the graph Laplacian on $(V,E,w)$ is expressed as \eqref{point-formula}. Note that the Markov chain $(p_{i,j})_{i,j\in\Z}$ above has arbitrarily long jumps. Indeed, for any $i,j\in\Z$ with $i\neq j$, the probability to jump from $i$ to $j$ is $p_{i,j}$ which is comparable to $|i-j|^{-(1+2s)}$ by \eqref{kernel-bd}. Hence, the graph $(V,E,w)$ is locally infinite (equivalently, $K_s$ has unbounded support), which is in contrast to Dungey's setting in \cite{Dungey-2008}. For some basics on the graph theory, see e.g. \cite{Grig2018}.
\end{remark}

In the discrete space, it seems that the most natural gradient operator is the difference operator $ \D$, defined as
$$\D f (x)=f(x+1)-f(x), \quad x\in\Z,$$
for every function $f:\Z\rightarrow\R$. However, we also introduce the the modulus of gradient $|\nabla\cdot|$ and the modulus of the modified gradient $|\widetilde{\nabla}\cdot |$ as candidates for the ``gradient'' operator. For every $f\in \mathcal{D}_s$ and every $ x\in \mathbb{Z}$, define
\begin{equation*}\begin{split}
|\nabla f| (x)&=\left(\sum_{y\in\Z}K_s(y-x)[f(x)-f(y)]^{2}\right)^{\frac{1}{2}}, \\
|\widetilde{\nabla} f| (x)&=\left(\sum_{\{y\in \mathbb{Z}:\ |f(x)|>|f(y)|\}} K_s(y-x) [f(x)-f(y)]^2 \right)^\frac{1}{2}.
\end{split}\end{equation*}
In the present work, we are interested in Littlewood--Paley--Stein square functions associated with $|\D\cdot|$, $|\nabla\cdot|$ and  $|\widetilde{\nabla}\cdot |$, respectively.

Let $s\in(0,1)$ and let $(e^{-tL})_{t\geq0}$ be the semigroup corresponding to $L$. The ``vertical'' Littlewood--Paley--Stein square functions that we consider are defined as follows: for any $f\in l^1$ and any $x\in\Z$,
\begin{equation*}\begin{split}
	G(f)(x)&:=\left(\int_{0}^{\infty} | \nabla e^{-tL} f| ^{2}(x)\, \d t\right)^{\frac{1}{2}},\\
\widetilde{G}(f)(x)&:=\left(\int_{0}^{\infty} | \widetilde{\nabla} e^{-tL} f| ^{2}(x)\, \d t\right)^{\frac{1}{2}},\\
H(f)(x)&:=\left(\int_{0}^{\infty} | \D e^{-tL} f| ^{2}(x)\, \d t\right)^{\frac{1}{2}}.
\end{split}\end{equation*}

 Let $q\in[1,\infty]$. Our aim is to establish the boundedness of the above square functions in $l^q$. We say that $G$ is bounded in $l^q$ if $G$ extends to $l^q$ and there exists a constant $c_q>0$, depending only on $q$, such that
 $$\|G(f)\|_{q}\leq c_q\|f\|_{q},\quad f\in l^q.$$
The same for $\widetilde{G}$ and $H$. The main result is presented in the following theorem.
\begin{theorem}\label{main}
For every $q\in(1,2]$, the square functions $\widetilde{G}$ and $H$ are bounded in $l^q$. For every $q\in[2,\infty)$, the square functions $G$, $\widetilde{G}$ and $H$ are bounded in $l^q$.
\end{theorem}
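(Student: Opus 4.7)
First I reduce the statement to two cases. Since $\{y:|f(y)|<|f(x)|\}\subset\Z$, we have $|\widetilde{\nabla}f|(x)\le|\nabla f|(x)$ and hence $\widetilde{G}(f)\le G(f)$ pointwise; thus the $l^q$-boundedness of $\widetilde{G}$ for $q\in[2,\infty)$ will follow from that of $G$. Next, for every $x\in\Z$ the edge $\{x,x+1\}$ contributes either to the restricted sum at $x$ (when $|f(x+1)|\le|f(x)|$) or to that at $x+1$ (when $|f(x)|<|f(x+1)|$), which gives
\[K_s(1)\,|\D f|^2(x)\le|\widetilde{\nabla}f|^2(x)+|\widetilde{\nabla}f|^2(x+1);\]
hence $H(f)(x)\le K_s(1)^{-1/2}(\widetilde{G}(f)(x)+\widetilde{G}(f)(x+1))$ and $\|H(f)\|_q\le 2K_s(1)^{-1/2}\|\widetilde{G}(f)\|_q$ for every $q\in[1,\infty]$. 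It therefore suffices to prove (i) $\|\widetilde{G}(f)\|_q\le C_q\|f\|_q$ for $q\in(1,2]$ and (ii) $\|G(f)\|_q\le C_q\|f\|_q$ for $q\in[2,\infty)$.

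For (i) I plan to use a Bakry-type $\Gamma$-calculus tailored to the restricted gradient. The decisive pointwise convexity inequality is
\[|b|^q-|a|^q-q|a|^{q-1}\mathrm{sign}(a)\,(b-a)\ge\tfrac{q(q-1)}{2}|a|^{q-2}(b-a)^2,\qquad |b|\le|a|,\ q\in(1,2],\]
which follows from Taylor's integral remainder applied to $\phi(u)=|u|^q$, using that $|r|^{q-2}\ge|a|^{q-2}$ on the segment between $a$ and $b$ (thanks to $q-2\le 0$ together with $|b|\le|a|$). Applying this with $(a,b)=(u_t(x),u_t(y))$ to $u_t:=e^{-tL}f$, weighting by $K_s(y-x)$ and summing over those $y$ with $|u_t(y)|<|u_t(x)|$ yields
\[q|u_t(x)|^{q-1}\mathrm{sign}(u_t(x))\,Lu_t(x)-L(|u_t|^q)(x)\ge\tfrac{q(q-1)}{2}|u_t(x)|^{q-2}|\widetilde{\nabla}u_t|^2(x).\]
Summing over $x\in\Z$ (and using $\sum_xL(|u_t|^q)(x)=0$ thanks to $K_s(-m)=K_s(m)$) yields $-\frac{d}{dt}\|u_t\|_q^q\ge\frac{q(q-1)}{2}\sum_x|u_t(x)|^{q-2}|\widetilde{\nabla}u_t|^2(x)$; integrating in $t\in(0,\infty)$ and using $\|u_t\|_q\le\|f\|_q$ gives
\[\int_0^\infty\sum_{x\in\Z}|u_t(x)|^{q-2}|\widetilde{\nabla}u_t|^2(x)\,dt\le\tfrac{2}{q(q-1)}\|f\|_q^q.\]
Finally, writing $|\widetilde{\nabla}u_t|^2(x)\le f^*(x)^{2-q}|u_t(x)|^{q-2}|\widetilde{\nabla}u_t|^2(x)$ with $f^*(x):=\sup_{t\ge 0}|e^{-tL}f|(x)$ (where the left-hand side is $0$ if $u_t(x)=0$), integrating in $t$, raising to the power $q/2$, and applying H\"older's inequality with conjugate exponents $2/(2-q)$ and $2/q$ yields $\|\widetilde{G}(f)\|_q^q\le C_q\|f^*\|_q^{q(2-q)/2}\|f\|_q^{q^2/2}$; Stein's maximal inequality for symmetric Markov semigroups gives $\|f^*\|_q\le C_q\|f\|_q$ for $q>1$, and since $q(2-q)/2+q^2/2=q$ the estimate closes.

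For (ii) I turn to the probabilistic setup suggested by the abstract. Let $(X_t)_{t\ge 0}$ denote the continuous-time Markov jump process on $\Z$ with generator $-L$ and jump rates $K_s(y-x)$; the counting measure is both invariant and reversing. For $T>0$ and $x\in\Z$, the process $M_t^{x,T}:=P_{T-t}f(X_t)$ ($t\in[0,T]$) is a c\`adl\`ag pure-jump martingale under $\P^x$ whose predictable quadratic variation equals
\[\langle M^{x,T}\rangle_T=\int_0^T|\nabla P_{T-s}f|^2(X_s)\,ds.\]
Combining the Burkholder-Davis-Gundy inequality in the pure-jump form (relating $\E\langle M\rangle_T^{q/2}$ to $\E\sup_{t\le T}|M_t-M_0|^q$ for $q\ge 2$) with Doob's $l^q$-inequality and contractivity of $P_T$ produces $\E^x\langle M^{x,T}\rangle_T^{q/2}\le C_q'\,P_T|f|^q(x)$; summing over $x$ and invoking invariance ($\sum_xP_T|f|^q(x)=\|f\|_q^q$) yields
\[\sum_{x\in\Z}\E^x\Big(\int_0^T|\nabla P_{T-s}f|^2(X_s)\,ds\Big)^{q/2}\le C_q'\,\|f\|_q^q.\]
The \emph{main obstacle} is the final step: converting this path-integral bound into one for $\sum_x G_T(f)(x)^q$, where $G_T(f)(x)^2:=\int_0^T|\nabla P_{T-s}f|^2(x)\,ds$, with at most an absolute constant loss. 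I anticipate accomplishing this either through a duality argument in $l^{q/2}(\Z)$ using the reversibility identity $\sum_x\phi(x)P_sh(x)=\sum_xh(x)P_s\phi(x)$, or via a stationarity/decoupling technique in the spirit of Ba\~{n}uelos \cite{Ba-2016} and Li-Wang \cite{LiWang16} adapted to this discrete setting. Letting $T\to\infty$ then concludes (ii) (via monotone convergence and Fatou), and the $l^q$-boundedness of $\widetilde{G}$ and $H$ for $q\in[2,\infty)$ is inherited through the pointwise comparisons of the first paragraph.
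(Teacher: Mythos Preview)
Your argument for (i) is correct and is essentially the paper's: the pointwise convexity bound you write down is exactly the content of the paper's Lemma~4.3/4.4 (there stated for $f\ge 0$ via the pseudo-gradient $\Gamma_q(f)=qfLf-f^{2-q}Lf^q$), and the way you close the estimate with $f^*$, H\"older and Stein's maximal theorem reproduces the paper's Proposition~4.2. One small wrinkle: your reduction $K_s(1)\,|\D f|^2(x)\le|\widetilde{\nabla}f|^2(x)+|\widetilde{\nabla}f|^2(x+1)$ can fail when $|f(x)|=|f(x+1)|$ but $f(x)\neq f(x+1)$, since the paper's $\widetilde{\nabla}$ uses a \emph{strict} inequality in the index set. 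The paper sidesteps this by first reducing to $f\ge 0$ (via sublinearity of $H$), after which $|f(x)|=|f(x+1)|$ forces $\D f(x)=0$; you should do the same.

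The genuine gap is precisely the ``main obstacle'' you flag in (ii): from
\[
\sum_{x}\E^{x}\Big(\int_0^T|\nabla P_{T-s}f|^2(X_s)\,ds\Big)^{q/2}\le C_q\|f\|_q^q
\]
one cannot pass to $\sum_x G_T(f)(x)^q$ by Jensen (the power $q/2\ge 1$ points the wrong way), and a bare duality in $l^{q/2}$ does not untangle the $q/2$-power from the path expectation. The paper fills this by introducing the \emph{smoothed} square function
\[
G_{*,T}(f)^2(x)=\int_0^T P_t\big(|\nabla P_tf|^2\big)(x)\,dt
=\sum_{z}\E_z\big(\langle M\rangle_T\mid X_T=x\big)\,p_T(z,x),
\]
where the conditional-expectation representation (Lemma~5.2, using reversibility) makes Jensen go the right way: $G_{*,T}(f)^q(x)\le\sum_z \E_z(\langle M\rangle_T^{q/2}\mid X_T=x)\,p_T(z,x)$, and summing in $x$ recovers exactly $\sum_z\E_z\langle M\rangle_T^{q/2}$. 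The link back to $G$ is then a pointwise splitting trick: writing $P_t=P_{t/2}P_{t/2}$ and using $|P_{t/2}g|\le P_{t/2}|g|$ for $g=P_{t/2}f(\cdot+y)-P_{t/2}f(\cdot)$ gives $G(f)(x)\le\sqrt{2}\,G_*(f)(x)$. This is the ``stationarity/decoupling'' device you allude to from \cite{Ba-2016,LiWang16}; once you insert these two ingredients your outline for (ii) becomes a complete proof matching the paper's.
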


Some remarks on Theorem \ref{main} are in order.
\begin{remark} (1) For $q\in(1,2)$, the square function $G$ may not be bounded; see the counterexample given in Section 3. So, in this case,  considering the modified version $\widetilde{G}$ is necessary.

(2) For $q\in(1,2]$, in order to highlight the flexibility of our method, we may consider the Littlewood--Paley--Stein square function associated with a class of nonlocal Schr\"{o}dinger operators and prove the boundedness in $l^q$ by adapting the approach employed for the proof of Theorem \ref{main}; see Corollary \ref{schro}.

(3) Consider the discrete Laplacian $\Delta$ and its semigroup $(e^{t\Delta})_{t\geq0}$. Define the ``horizontal'' Littlewood--Paley--Stein square function or $g$-function as
 $$g(f)=\left(\int_0^\infty t\Big|\frac{\partial}{\partial t}e^{t\Delta}f\Big|^2\,\d t\right)^{\frac{1}{2}}.$$
It is well known that, by the general result \cite[Corollary 1 on page 120]{Stein-1970-Topics} for Markov semigroups on metric measure spaces,  $g$ is bounded in $l^q$ for all $1<q<\infty$. In the recent paper \cite{ciaurri-2017}, it is proved that $g$ is bounded in the weighted $l^q$ space for all $q\in(1,\infty)$, where the weight is of Muckenhoupt type.
\end{remark}

Let $\U$ be a non-negative function defined on $\Z$ and $s\in(0,1)$. Consider the nonlocal Schr\"{o}dinger operator on $\Z$, i.e.,
$$L_{\U}:=(-\Delta)^s+\U,$$
which is understood in the sense of quadratic forms. Assume that $L_\U$ generates a strongly continuous, contractive, symmetric and sub-Markovian semigroup on $l^2$, denoted by $(e^{-tL_\U})_{t\geq0}$. Note that $(e^{-tL_\U})_{t\geq0}$ is the so-called ``symmetric diffusion semigroup''  without the conservation property in the sense of the second paragraph on page 65 in \cite{Stein-1970-Topics}. As a consequence, $(e^{-tL_\U})_{t\geq0}$ may be extended to a strongly continuous, positive and contractive semigroup on $l^q$ for all $q\in[1,\infty)$. For simplicity, we keep using the same notation. Then, for each $q\in [1,\infty)$, we have the semigroup domination, i.e.,
$$0\leq e^{-tL_\U}f\leq e^{-tL}f,\quad t\geq0,\,f\in l^q_+,$$
where $l^q_+$ stands for the cone of nonnegative functions in $l^q$. We refer to
\cite{Demuth-2000,Davies89} for a comprehensive study on Schr\"{o}dinger operators. 

For a suitable function $f:\Z\rightarrow\R$, define the Littlewood--Paley--Stein square function in this case as
$$\widetilde{G}_{\U}(f)=\left(\int_{0}^{\infty}\big( | \widetilde{\nabla}e^{-tL_\U}f|^{2}+|\sqrt{\U}e^{-tL_\U}f|^2\big)\,\d t\right)^{\frac{1}{2}},$$
Then we have the following result.
\begin{corollary}\label{schro}
For every $q\in(1,2]$, the square functions $\widetilde{G}_{\U}$ is bounded in $l^q$, i.e., there exists a constant $c_q>0$, depending only on $q$, such that
 $$\|\widetilde{G}_{\U}(f)\|_{q}\leq c_q\|f\|_{q},\quad f\in l^q.$$
\end{corollary}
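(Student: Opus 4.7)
The plan is to adapt the proof of Theorem \ref{main} for $\widetilde{G}$ in the range $q\in(1,2]$ to the perturbed operator. The extra potential term in $\widetilde{G}_\U$ arises naturally from the decomposition
$$\langle L_\U h,|h|^{q-2}h\rangle=\langle Lh,|h|^{q-2}h\rangle+\sum_{x\in\Z}\U(x)|h(x)|^q$$
upon differentiating $\|e^{-tL_\U}f\|_q^q$ in $t$, so the $\sqrt{\U}$-contribution to $\widetilde{G}_\U$ enters on an equal footing with the nonlocal gradient piece.

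The first step, parallel to the proof of Theorem \ref{main}, is the generalized carr\'{e} du champ inequality: for $1<q\leq 2$ and any real-valued $h\in\mathcal{D}_s\cap l^q$,
$$\langle Lh,|h|^{q-2}h\rangle \geq c_q\sum_{x\in\Z}|\widetilde{\nabla}h|^2(x)\,|h(x)|^{q-2}.$$
I would obtain this by symmetrizing the bilinear form,
$$\langle Lh,|h|^{q-2}h\rangle=\frac{1}{2}\sum_{x,y\in\Z}K_s(x-y)(h(x)-h(y))\big(|h(x)|^{q-2}h(x)-|h(y)|^{q-2}h(y)\big),$$
applying the elementary inequality $(a-b)(|a|^{q-2}a-|b|^{q-2}b)\geq c_q(|a|+|b|)^{q-2}(a-b)^2$ valid for $q\in(1,2]$, and then restricting the double sum to pairs with $|h(y)|<|h(x)|$, where $(|h(x)|+|h(y)|)^{q-2}\geq 2^{q-2}|h(x)|^{q-2}$. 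Setting $u_t:=e^{-tL_\U}f$, differentiating $\|u_t\|_q^q$ and using $\U\geq 0$, then integrating in $t$ from $0$ to $\infty$, yields
$$\|f\|_q^q \geq q c_q\int_0^\infty\!\sum_x|\widetilde{\nabla}u_t|^2(x)|u_t(x)|^{q-2}\,\d t+q\int_0^\infty\!\sum_x\U(x)|u_t(x)|^q\,\d t.$$
The vanishing of $\|u_t\|_q$ as $t\to\infty$ used here follows from the semigroup domination $|u_t|\leq e^{-tL}|f|$ together with the $l^q$-decay of $(e^{-tL})$ (first on $l^1\cap l^q$ via standard heat-kernel estimates, then on $l^q$ by contractivity).

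For the unweighted bound, I would apply Stein's maximal-function trick. Setting $u^*(x):=\sup_{t\geq 0}|u_t(x)|$ and using $|u_t(x)|^{2-q}\leq u^*(x)^{2-q}$,
$$\widetilde{G}_\U(f)^2(x)\leq u^*(x)^{2-q}\int_0^\infty\big(|\widetilde{\nabla}u_t|^2(x)|u_t(x)|^{q-2}+\U(x)|u_t(x)|^q\big)\,\d t.$$
Raising to the power $q/2$, summing in $x$, and applying H\"older with conjugate exponents $2/(2-q)$ and $2/q$ gives
$$\|\widetilde{G}_\U(f)\|_q^q\leq \|u^*\|_q^{q(2-q)/2}\cdot C\|f\|_q^{q^2/2}.$$
By semigroup domination once more, $u^*\leq \sup_{t\geq 0}e^{-tL}|f|$, so Stein's maximal ergodic theorem applied to the symmetric sub-Markovian semigroup $(e^{-tL})$ yields $\|u^*\|_q\leq C_q\|f\|_q$ for $q\in(1,\infty)$. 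Since $q(2-q)/2+q^2/2=q$, the conclusion $\|\widetilde{G}_\U(f)\|_q\leq C\|f\|_q$ follows.

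The main obstacle is the carr\'{e} du champ inequality itself: the naive symmetric form produces the two-sided weight $(|a|+|b|)^{q-2}(a-b)^2$, and it is precisely the one-sided restriction $|h(y)|<|h(x)|$ encoded in $|\widetilde{\nabla}\cdot|$ that permits the replacement of $(|h(x)|+|h(y)|)^{q-2}$ by $|h(x)|^{q-2}$. This is the reason the modified gradient, rather than $|\nabla\cdot|$, must appear in $\widetilde{G}_\U$ for $q<2$, consistent with the counterexample in Section~3.
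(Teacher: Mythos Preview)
Your argument is correct and follows the same Stein-type scheme as the paper: bound $\widetilde{G}_\U(f)^2$ pointwise by $(u^{*})^{2-q}$ times a weighted energy, then close via H\"older and the maximal inequality. The key technical step, however, is organized differently. The paper works pointwise with the pseudo-gradient $\Gamma_{q,\U}(f)=qfL_\U f-f^{2-q}L_\U f^q$ for nonnegative $f$, using the Taylor expansion of $t\mapsto t^q$ (Lemma~\ref{LE4.3}) to obtain the pointwise bound $|\widetilde{\nabla}f|^2+\U f^2\leq\tfrac{2}{q(q-1)}\Gamma_{q,\U}(f)$, and first reduces to $f\geq0$ by passing to $|f|$ as in \cite{Li-2020}. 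You instead symmetrize the Dirichlet form and invoke the elementary inequality $(a-b)(|a|^{q-2}a-|b|^{q-2}b)\geq c_q(|a|+|b|)^{q-2}(a-b)^2$, obtaining a \emph{summed} lower bound $\langle Lh,|h|^{q-2}h\rangle\geq c_q\sum_x|\widetilde{\nabla}h|^2(x)|h(x)|^{q-2}$ valid for signed $h$. Your route is slightly more streamlined and sidesteps the reduction to nonnegative data; the paper's pointwise $\Gamma_q$ comparison yields the sharper intermediate bound $\widetilde{G}_\U(f)^2\leq c_q(\mathcal{H}_{q,\U}f)^2$ pointwise (not merely after summation) and more closely mirrors Dungey's graph formulation. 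One minor remark: you do not actually need $\|u_t\|_q\to0$; it suffices that $\|u_t\|_q^q\geq0$, so that integrating $-\tfrac{\d}{\d t}\|u_t\|_q^q$ over $[0,T]$ and letting $T\to\infty$ already gives the desired upper bound.
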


In Section 3, we construct a concrete counterexample to show that $G$ is not always bounded in $l^q$ for all $q\in(1,2)$. In Section 4, we present the proofs for Theorem \ref{main} when $q\in(1,2]$ and Corollary \ref{schro}. In Section 5, we prove Theorem \ref{main} when $q\in[2,\infty)$.

In what follows, for convenience, we also use $P_t$ (resp. $P_t^\U$) to denote $e^{-tL}$ (resp. $e^{-tL_\U}$) for every $t\geq0$.

\section{A counterexample}\hskip\parindent
Let $(h_t)_{t > 0}$ be the heat kernel corresponding to $\Delta$. It is known that
$$h_{t}(x,y)=\frac{e^{-2t}}{\pi}\int_{0}^{\pi} e^{2t\cos(u)}\cos(|x-y|u)\, \d u,\quad x,y\in\Z,\,t>0;$$
see e.g. \cite{ciaurri-2017}.

Let $s\in(0,1)$ and $t>0$. We use $\i$ to denote the imaginary unit. Set
\begin{eqnarray*}
f_{s, t}(\lambda):=
\begin{cases}
\begin{aligned}
\frac{1}{2 \pi \i} \int_{a-\i \infty}^{a+\i \infty} e^{z \lambda-t z^{s} }\,\d z, \quad&{\lambda \geq 0,\,a>0}, \\
0, \quad&{\lambda<0}.\end{aligned}
\end{cases}
\end{eqnarray*}
Then $f_{s, t}(\lambda)\geq0$ for every $\lambda\geq0$. Let $q\in[1,\infty]$. For every $u\in l^q$, we have
\begin{eqnarray*}
P_tu(x) &=&\int_0^{\infty} f_{s, t}(\lambda) e^{\lambda\Delta}u(x)\, \d \lambda\\
&=&\int_0^{\infty} f_{s, t}(\lambda) \sum_{m\in\Z}h_t(x,m)u(m)\, \d \lambda,\quad x\in\Z.
\end{eqnarray*}
In addition, we have the formula
\begin{equation}\label{minus-exp-fomula}
e^{-ta^s}=\int_{0}^{\infty}e^{-\lambda a}f_{s,t}(\lambda)\, \d\lambda, \quad t>0,\,a>0.
\end{equation}
For these facts, refer to \cite[Section 9, Chapter IX]{Yosida-1980} and \cite{SSV2012} for instance.

Now take
\begin{eqnarray*}
f(x)=
\begin{cases}
	0,\quad&{x\in\mathbb{Z}\setminus \{1\},} \\
	1,\quad&{ x=1.}
\end{cases}
\end{eqnarray*}
It is clear that $f\in l^q$ for all $q\in[1,\infty]$, and
\begin{eqnarray}\label{Pts}
P_tf(x)=\int_{0}^{\infty} f_{s, t}(\lambda)\frac{e^{-2\lambda}}{\pi}\int_{0}^{\pi} e^{2\lambda \cos(u)}\cos(|x-1|u)\,  \d u  \d\lambda,\quad x\in\Z.
\end{eqnarray}

Assume that $x\in\Z$ and $x>1$. Then, by \eqref{Pts} and the first inequality in \eqref{kernel-bd}, we have
\begin{eqnarray*}
 &&G(f)^{2}(x)\\
 &=&\int_{0}^{\infty} \sum_{y\in \mathbb{Z}\setminus \{0\}}[P_{t} f(x+y)-P_{t} f(x)]^{2}K_s(y) \,\d t\\
 &\geq& \int_{0}^{\infty} c_s^{-1}\sum_{y\in \mathbb{Z}\setminus \{0\}}\frac{[P_{t} f(x+y)-P_{t} f(x)]^{2}}{|y|^{1+2s}} \,\d t\\
 &=&\int_{0}^{\infty} \sum_{y\in \mathbb{Z}\setminus\{0\}} \frac{c_s^{-1}}{|y|^{1+2s}}\left(\int_{0}^{\infty} f_{s, t}(\lambda)\frac{e^{-2\lambda}}{\pi}\int_{0}^{\pi} e^{2\lambda \cos(u)}[\cos(|x+y-1|u)-\cos(|x-1|u)]\,  \d u  \d\lambda \right) ^{2}\,\d t\\
 &\geq& \int_{0}^{\infty}\frac{c_s^{-1}}{|1-x|^{1+2s}}\left(\int_{0}^{\infty} f_{s, t}(\lambda)\frac{e^{-2\lambda}}{\pi}\int_{0}^{\pi} e^{2\lambda \cos(u)}[1-\cos(|x-1|u)]\,  \d u   \d \lambda\right) ^{2}\,\d t\\
&\ge&\int_{0}^{\infty}\frac{c_s^{-1}}{|1-x|^{1+2s}} \left(\int_{0}^{\infty} f_{s, t}(\lambda)e^{-4\lambda} \d\lambda \right) ^{2}\,\d t \\
&=&\frac{c_s^{-1}}{|1-x|^{1+2s}}\int_{0}^{\infty} \left( e^{-t4^{s}} \right)^{2}\, \d t,
\end{eqnarray*}
where we employed \eqref{minus-exp-fomula} in the last equality.

In particular, take $s=\frac{1}{4}$. Then, $4\int_{0}^{\infty} e^{-2\sqrt{2}t} \,\d t=\sqrt{2}$, and we immediately have
$$|G(f)(x)|\ge \frac{\sqrt[4]{2}c_{1/4}^{-1}}{2|1-x|^{3/4}},\quad x\in\Z,\, x>1.$$
Thus
$$\begin{aligned}
	\Vert G(f) \Vert^{q}_{q}&=\sum_{x\in \mathbb{Z}}|G(f)(x)|^{q}\\
	&\ge \sum_{m=2}^{\infty}|G(f)(m)|^{q}\\
	&\ge{\left(\frac{\sqrt[4]{2}c_{1/4}^{-1}}{2}\right)}^q\sum_{m=1}^\infty\frac{1}{m^{3q/4}}.
\end{aligned}$$
It is clear that, for every $q\in(1,\frac{4}{3}]$, we have $\Vert G(f) \Vert_{q}=\infty.$

Let us note in passing that the above example also shows that the ``local doubling'' property is necessary in the paper \cite{Dungey-2008}.

\section{The case $1<q\leq2$}\hskip\parindent
In this section, we aim to prove the boundedness of $\widetilde{G}$ and $H$ in $l_q$ for all $q\in(1,2]$, as well as Corollary \ref{schro}. At first, let us give a brief description of the  idea of proof.

Let $\mathbb{M}$ be a Riemannian manifold, $\Delta_\mathbb{M}$ be the Laplace--Beltrami operator, $\nabla_\mathbb{M}$ be the gradient operator, and $|\cdot|$ be the length in the tangent space. Let $q\in(1,2]$. The classic argument used by Stein (see \cite{Stein-1970,Stein-1970-Topics}) depends on the following chain rule
\begin{equation*}\Delta_\mathbb{M}f^q=q(q-1)f^{q-2}|\nabla_\mathbb{M} f|^{2}-qf^{q-1}\Delta_\mathbb{M }f, \end{equation*}
for every $0<f\in C^\infty(\mathbb{M})$. However, this chain rule is no longer valid for fractional discrete Laplacian $(-\Delta)^s$. Following the idea of Dungey \cite{Dungey-2008} (in the setting of locally finite graphs),  we introduce the pseudo-gradient operator $\Gamma_q$, i.e.,
$$\Gamma_{q}(f)=q fL f-f^{2-q} Lf^{q},$$
for suitable nonnegative functions $f$ defined on $\Z$, and consider
$$\mathcal{H}_{q}(f)=\left(\int_{0}^{\infty}  \Gamma_{q}\left(e^{-t L} f\right)\,\d t \right)^{1 / 2},\quad 0\leq f\in l^1.$$
By adapting Stein's argument, we may prove the $l^q$ boundedness of $\mathcal{H}_{q}$; see Proposition \ref{LE4.2}. Hence, in order to show the $l^q$ boundedness of $\widetilde{G}$, $G$ and $H$, the problem left is to compare  $\Gamma_{q}$ and $|\widetilde{\nabla}\cdot|$, $|\nabla\cdot|$ and $|\D\cdot|$,  which hinges on a deep understanding of $\Gamma_q$.

Given $f\in l^q$, define the semigroup maximal function $f^*$ by
$$f^*(x):=\sup_{t>0}|e^{-tL} f|(x),\quad x\in \mathbb{Z}.$$
Then we have the following lemma which is proved in \cite[page 73]{Stein-1970-Topics} for much more general contractive, symmetric sub-Markovian semigroups.
\begin{lemma}\label{LE4.1}
For every $q\in (1,\infty]$, there exists a constant $c_q>0$ depending only on $q$ such that
$$\Vert f^*\Vert_{q} \leq c_q \Vert f\Vert_{q},\quad q\in(1,\infty].$$
\end{lemma}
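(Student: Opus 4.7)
The plan is to reduce the claim to Stein's general maximal theorem for symmetric contractive sub-Markovian semigroups on $\sigma$-finite measure spaces (the statement referenced as \cite[p.~73]{Stein-1970-Topics}), after verifying that $(e^{-tL})_{t\geq 0}$ meets its hypotheses on $(\Z,\#)$.

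First I would verify the structural properties of the semigroup. Symmetry on $l^2$ follows from the symmetry of the kernel $K_s(m)=K_s(-m)$, which makes $L$ a self-adjoint, nonnegative operator on $l^2$. The Markovian (in particular sub-Markovian) property is built into the jump process description in Section~2: the transition probabilities $p_{i,j}=K_s(i-j)/\Vert K_s\Vert_{l^1}$ are nonnegative and satisfy $\sum_{j\in\Z}p_{i,j}=1$, so $e^{-tL}$ preserves positivity and fixes the constant function $1$. Combining self-adjointness, positivity, and the $l^\infty$ contraction $\Vert e^{-tL}f\Vert_\infty\leq\Vert f\Vert_\infty$, duality gives the $l^1$ contraction, and Riesz--Thorin interpolation then yields $\Vert e^{-tL}f\Vert_q\leq\Vert f\Vert_q$ for every $q\in[1,\infty]$.

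Having checked the hypotheses, Stein's theorem immediately furnishes, for every $q\in(1,\infty)$, a constant $c_q>0$ depending only on $q$ such that $\Vert f^*\Vert_q\leq c_q\Vert f\Vert_q$. The endpoint $q=\infty$ is trivial: the $l^\infty$-contractivity of $e^{-tL}$ yields $f^*(x)\leq\Vert f\Vert_\infty$ pointwise, so one may take $c_\infty=1$. The only nontrivial conceptual point is observing that our discrete nonlocal semigroup genuinely fits Stein's abstract framework despite the underlying graph being locally infinite; once this is noted, the lemma requires no new analysis beyond Stein's original argument, which combines the Hopf--Dunford--Schwartz ergodic maximal inequality with the symmetry of the semigroup to interpolate between a weak-type $(1,1)$ estimate for a Cesaro-averaged operator and the trivial $l^\infty$ bound.
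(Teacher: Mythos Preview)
Your proposal is correct and matches the paper's approach: the paper does not give its own proof but simply cites \cite[page~73]{Stein-1970-Topics}, noting that the result holds for general contractive, symmetric sub-Markovian semigroups. You have supplied exactly the verification of hypotheses that the citation implicitly relies on, together with the trivial $q=\infty$ endpoint, so nothing further is needed.
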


We have the following boundedness property for $\mathcal{H}_q$.
\begin{proposition}\label{LE4.2}
Let $q\in(1,2]$. There exists a constant $c_q>0$ depending only on $q$ such that
$$\Vert \mathcal{H}_{q} (f) \Vert_{q} \leq c_q \Vert f \Vert_{q},\quad 0 \leq f \in l^1.$$
\end{proposition}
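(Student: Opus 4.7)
The plan is to adapt Stein's classical argument for symmetric Markov semigroups (cf.\ \cite{Stein-1970-Topics}) to the nonlocal discrete setting, with $\Gamma_q$ playing the role of the carr\'{e}-du-champ. The starting point is the pointwise identity
\begin{equation*}
\Gamma_q(P_tf)(x)=(P_tf)(x)^{2-q}\bigl[(-\pp_t-L)(P_tf)^q\bigr](x),\qquad x\in\Z,\,t>0,
\end{equation*}
which follows by differentiating $(P_tf)^q$ in $t$ (using $\pp_tP_tf=-LP_tf$) and comparing with the definition of $\Gamma_q$. The hope is that, after integrating in $t$, the $\pp_t$-term will telescope while summation over $x$ annihilates the $L$-term by symmetry.

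The second ingredient is the pointwise positivity $\Gamma_q(g)\geq 0$ for $g\geq 0$. Rewriting $\Gamma_q(g)(x)$ via \eqref{point-formula} as a single sum $\sum_{y\in\Z}K_s(x-y)\,\phi(g(x),g(y))$ with $\phi(a,b):=qa(a-b)-a^{2-q}(a^q-b^q)$, the task reduces to the elementary inequality $\phi(a,b)\geq 0$ for $a,b\geq 0$ and $q\in(1,2]$, which is verified by noting that $b\mapsto\phi(a,b)$ is convex on $[0,\infty)$ with a unique minimum at $b=a$ where it vanishes. In particular, the bracket $(-\pp_t-L)(P_tf)^q$ is itself pointwise nonnegative.

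Combining these with $0\leq P_tf(x)\leq f^*(x)$ and $2-q\geq 0$ yields the pointwise bound
$$\mathcal{H}_q(f)^2(x)\leq f^*(x)^{2-q}\,J(x),\qquad J(x):=\int_0^\infty\bigl[(-\pp_t-L)(P_tf)^q\bigr](x)\,\d t.$$
Raising to the power $q/2$, summing over $x$, and applying H\"{o}lder's inequality with conjugate exponents $2/(2-q)$ and $2/q$ (so that $(2-q)/2+q/2=1$), I arrive at
$$\Vert\mathcal{H}_q(f)\Vert_q^q\leq\Vert f^*\Vert_q^{q(2-q)/2}\,\Vert J\Vert_1^{q/2}.$$
Lemma \ref{LE4.1} gives $\Vert f^*\Vert_q\leq c_q\Vert f\Vert_q$. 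To bound $\Vert J\Vert_1$, I would swap the order of summation and integration and invoke (i) $\sum_xL(P_tf)^q(x)=0$, which follows from the symmetry $K_s(-m)=K_s(m)$ together with Fubini (justified since \eqref{kernel-bd} gives $K_s\in l^1$ and $(P_tf)^q\in l^1$); and (ii) $\int_0^\infty-\pp_t\Vert P_tf\Vert_q^q\,\d t\leq\Vert f\Vert_q^q$, a telescoping using $l^q$-contractivity of $P_t$. These inequalities together produce $\Vert\mathcal{H}_q(f)\Vert_q\leq c_q^{(2-q)/2}\Vert f\Vert_q$, matching the claim.

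The main obstacle I anticipate is the rigorous justification of the above interchanges of sums and integrals, together with the verification that $(P_tf)^q\in l^1$ for each $t>0$. These should follow from the observation that for $f\in l^1_+$ the semigroup $(P_t)_{t\geq 0}$ maps $l^1$ into $l^1\cap l^\infty$, so $(P_tf)^q\leq\Vert P_tf\Vert_\infty^{q-1}P_tf\in l^1$; combined with the kernel bound \eqref{kernel-bd} ensuring $K_s\in l^1$, this feeds the dominated-convergence and Fubini applications invoked throughout.
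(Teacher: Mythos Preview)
Your proposal is correct and follows essentially the same route as the paper: the identity $\Gamma_q(P_tf)=(P_tf)^{2-q}\bigl[(-\pp_t-L)(P_tf)^q\bigr]$, the pointwise bound $\mathcal{H}_q(f)^2\leq (f^*)^{2-q}J$, H\"older with exponents $2/(2-q)$ and $2/q$, the maximal inequality (Lemma~\ref{LE4.1}), and the evaluation $\sum_x J(x)\leq\|f\|_q^q$ via $\sum_x Lg=0$ and telescoping in $t$. Your additional explicit verification that $\Gamma_q\geq 0$ (hence the integrand of $J$ is nonnegative) and your remarks on the integrability needed for Fubini are points the paper leaves implicit, but otherwise the arguments coincide.
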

\begin{proof} Let  $q\in(1,2]$ and $0 \leq f \in l^1$. Set $u_{t}:=P_t f$ for every $t\geq0$.  
We have
\begin{eqnarray*}
u_{t}^{q-2} \Gamma_{q}(u_{t})&=&q u_{t}^{q-1} L u_{t} -L u_{t}^{q} \\
&=&q u_{t}^{q-1}(\partial_{t}+L) u_{t}-(\partial_{t}+L)(u_{t}^{q}).
\end{eqnarray*}
Since $(\partial_{t}+L) u_{t}=0$, we immediately get
$$\Gamma_{q}(u_{t})=-u_{t}^{2-q}(\partial_{t}+L)(u_{t}^{q}).$$
Then
\begin{eqnarray*}
(\mathcal{H}_{q} f)^{2}&=&\int_{0}^{\infty}  \Gamma_{q}(u_{t})\,\d t\\
&=&-\int_{0}^{\infty}  u_{t}^{2-q}(\partial_{t}+L)(u_{t}^{q})\,\d t\\
&\leq& (f^{*})^{2-q} J,
\end{eqnarray*}
 where
	$$J(x):=-\int_{0}^{\infty} \left(\partial_{t}+L\right)u_{t}^{q}(x)\,\d t\geq0,\quad x\in\Z.$$

According to the H{\"o}lder's inequality and Lemma \ref{LE4.1}, there exists a constant $c_q'>0$ such that
	\begin{eqnarray*}
		\sum_{x\in\Z} \big(\mathcal{H}_{q} f(x)\big)^{q} & \leq &\sum_{x\in\Z}  f^{*}(x)^{q(2-q) / 2} J(x)^{q / 2} \\
		& \leq&\left[\sum_{x\in\Z}  f^{*}(x)^{q}\right]^{(2-q) / 2}\left[\sum_{x\in\Z}  J(x)\right]^{q / 2}\\
      &\leq&c_{q}'\|f\|_{q}^{q(2-q)/2}\left[\sum_{x\in\Z}  J(x)\right]^{q / 2}.
	\end{eqnarray*}

By \eqref{point-formula}, for each  $g \in l^{1}$,
$$\sum_{x\in\Z} (L g)(x)=0.$$
Hence
	\begin{eqnarray*}
		\sum_{x\in\Z}  J(x) &=&-\int_{0}^{\infty} \left(\sum_{x\in\Z}  \partial_{t}u_{t}^{q}(x)\right)\,\d t\\
&=&-\int_{0}^{\infty} \partial_{t}\left(\sum_{x\in\Z}u_{t}^{q}(x)\right)\,\d t \\
		& \leq& \sum_{x\in\Z}  f(x)^{q}=\|f\|_{q}^{q} .
	\end{eqnarray*}

Combining the above estimates together, we finally arrive at
	$$
	\sum_{x\in\Z} \big(\mathcal{H}_{q} f(x)\big)^{q} \leq c_{q}\|f\|_{q}^{q},\quad 0\leq f\in l^1,
	$$
for some constant $c_q>0$ depending only on $q$.
\end{proof}

In order to control the modulus of difference operator $|\D\cdot|$ and the modulus of modified gradient $|\widetilde{\nabla}\cdot|$ by $\Gamma_q(\cdot)$ pointwise, we need the following lemma which provides an explicit expression for $\Gamma_q$.
\begin{lemma}\label{LE4.3}
	Let $q\in (1,2]$. For every $0\leq f \in l^1$ and every $x\in\Z$,
\begin{equation}\begin{split}\label{gamma-q}
	&\Gamma_{q}(f)(x) \\
	&=\sum_{y\in \mathbb{Z}} K_s(x-y)\big[q f(x)(f(x)-f(y))-f(x)^{2-q}\big(f(x)^{q}-f(y)^{q}\big)\big] \\
		&=q(q-1) \sum_{y\in \mathbb{Z}} K_s(x-y)\big(f(x)-f(y)\big)^{2} \int_{0}^{1}  \frac{(1-u) f(x)^{2-q}}{\big[(1-u) f(x)+u f(y)\big]^{2-q}}\, \d u.
\end{split}\end{equation}
\end{lemma}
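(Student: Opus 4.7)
The plan is to establish the two equalities in turn. The first is essentially an unwinding of the definition, while the second is a pointwise algebraic identity derived from Taylor's theorem with integral remainder.

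For the first equality, I start from $\Gamma_{q}(f)(x) = q f(x) L f(x) - f(x)^{2-q} L f^{q}(x)$ and apply the pointwise formula \eqref{point-formula} to both $Lf(x)$ and $Lf^{q}(x)$ separately. Since $0 \leq f \in l^{1} \subset \mathcal{D}_{s}$, the series for $Lf(x)$ converges absolutely. For the $Lf^{q}(x)$ piece, I would use the trivial bound $f \leq \|f\|_{\infty} \leq \|f\|_{1}$ to deduce $f^{q} \leq \|f\|_{\infty}^{q-1} f \in l^{1} \subset \mathcal{D}_{s}$, so \eqref{point-formula} applies to $f^{q}$ as well. Combining the two absolutely convergent sums under the common factor $K_{s}(x-y)$ yields the bracketed expression on the first line of \eqref{gamma-q}.

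For the second equality, the heart of the matter is the scalar identity
\begin{equation*}
qa(a-b) - a^{2-q}(a^{q} - b^{q}) = q(q-1)(a-b)^{2} \int_{0}^{1} \frac{(1-u) a^{2-q}}{[(1-u) a + u b]^{2-q}}\, \d u,
\end{equation*}
for $a := f(x)$ and $b := f(y) \geq 0$. I would derive it by applying the integral form of Taylor's theorem, $h(1) = h(0) + h'(0) + \int_{0}^{1}(1-u) h''(u)\, \d u$, to the function $h(u) := [(1-u) a + u b]^{q}$ on $[0,1]$. The routine computations $h(0) = a^{q}$, $h(1) = b^{q}$, $h'(0) = q(b-a) a^{q-1}$, and $h''(u) = q(q-1)(b-a)^{2}[(1-u) a + u b]^{q-2}$, followed by multiplying the resulting identity through by $-a^{2-q}$ and using $(b-a)^{2} = (a-b)^{2}$, deliver the displayed identity. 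Substituting it term by term into the first line of \eqref{gamma-q} and interchanging summation and integration via Tonelli's theorem (valid since each integrand is non-negative) then produces the second line of \eqref{gamma-q}.

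The only technical subtlety is the treatment of degenerate summands where $f(x) = 0$ and $q < 2$: the factor $f(x)^{2-q}$ in the numerator vanishes while the denominator $[(1-u)f(x) + u f(y)]^{2-q}$ may also vanish at $u = 0$. Here the bracketed summand on the first line of \eqref{gamma-q} is identically $0$, and the integrand on the second line vanishes for all $u \in (0,1]$, so the Lebesgue integral is $0$ regardless of the ambiguous value at the single point $u = 0$, and the two forms still agree. I do not anticipate any genuine obstacle; the main care is bookkeeping of signs in the Taylor computation and verifying that these boundary cases cause no real harm.
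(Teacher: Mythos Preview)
Your proposal is correct and follows essentially the same route as the paper: both obtain the second equality from Taylor's theorem with integral remainder for the power function, the paper expanding $t\mapsto t^{q}$ at $s=f(x)$ while you equivalently parametrize via $h(u)=[(1-u)a+ub]^{q}$. One trivial slip: the factor you multiply through by should be $+a^{2-q}$, not $-a^{2-q}$; otherwise your added justifications ($f^{q}\in\mathcal{D}_{s}$, the degenerate case $f(x)=0$, and Tonelli) only make the argument more careful than the paper's.
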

\begin{proof}
According to the definition of $\Gamma_{q}$, the first equation in \eqref{gamma-q} can be obtained by simple calculation, and we are left to prove the second equation.

Consider the Taylor expansion of the function $t\mapsto t^q$. For any $s,t\geq0$ with $s\neq t$, we have
\begin{eqnarray*}
		t^{q}-s^{q} &=&q s^{q-1}(t-s)+q(q-1) \int_{s}^{t}(t-\tau) \tau^{q-2}\, \d \tau \\
		&=&qs^{q-1}(t-s)+q(q-1)(t-s)^{2} \int_{0}^{1} \frac{1-u}{[(1-u) s+u t]^{2-q}}\, \d u.
	\end{eqnarray*}
Let $0\leq f \in l^1$ such that $f(x)\neq f(y)$. Taking $s=f(x)$ and $t=f(y)$ in the above equality, we have
\begin{eqnarray*}
		&&q f(x)(f(x)-f(y))-f(x)^{2-q}(f(x)^{q}-f(y)^{q}) 	\\
		&=&f(x)^{2-q}[f(y)^{q}-f(x)^{q}-q f(x)^{q-1}(f(y)-f(x))] \\
		&=&f(x)^{2-q} q(q-1)(f(y)-f(x))^{2} \int_{0}^{1} \frac{1-u}{[(1-u) f(x)+u f(y)]^{2-q}}\, \d u \\
		&=&q(q-1)(f(x)-f(y))^{2} \int_{0}^{1} \frac{(1-u) f(x)^{2-q}}{[(1-u) f(x)+u f(y)]^{2-q}}\, \d u.
	\end{eqnarray*}
Thus, by the first equality of \eqref{gamma-q}, we obtain
	$$\Gamma_{q}(f)(x)=q(q-1) \sum_{y\in \mathbb{Z}} K_s(x-y)[f(x)-f(y)]^{2} \int_{0}^{1}
\frac{(1-u) f(x)^{2-q}}{((1-u) f(x)+u f(y))^{2-q}} \,\d u.$$
\end{proof}

The next lemma shows that we can bound $|\D f|$ by $\Gamma_q(f)$ in the pointwise sense.
\begin{lemma}\label{gamma-diff-bd}
For any $q\in(1,2]$,
\begin{eqnarray}\label{gamma-bd}
0\leq |\widetilde{\nabla}f|^2(x)\leq \frac{2}{q(q-1)}\Gamma_q (f)(x), \quad 0\leq f\in l^1,\,x\in \mathbb{Z},
	\end{eqnarray}
and there exists a constant $c_q>0$ such that
\begin{eqnarray}\label{diff-bd}
 |\D f|^2 (x)\leq c_q \left[\Gamma_q(f)(x+1)+\Gamma_q(f)(x)\right], \quad 0\leq f\in l^1,\,x\in \mathbb{Z}.
 	\end{eqnarray}
\end{lemma}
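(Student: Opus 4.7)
The plan is to start from the integral representation of $\Gamma_q$ provided by the second equality in Lemma \ref{LE4.3}, and exploit the elementary observation that when $f(x)\ge f(y)\ge 0$ the ``weight'' appearing inside the integral is bounded below by $1-u$. Concretely, since $q\in(1,2]$ gives $2-q\in[0,1)$, whenever $f(x)\ge f(y)$ we have $(1-u)f(x)+uf(y)\le f(x)$ and hence
\[
\frac{(1-u)f(x)^{2-q}}{\bigl[(1-u)f(x)+uf(y)\bigr]^{2-q}}\ge 1-u,
\]
so that the $u$-integral is at least $\int_0^1(1-u)\,\d u=\tfrac12$. This single inequality is the engine that drives both bounds.

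For \eqref{gamma-bd}, I would observe that every summand on the right-hand side of the second equality in \eqref{gamma-q} is nonnegative, so restricting the sum to $\{y\in\Z:f(x)>f(y)\}$ only decreases it. On that restricted set of indices the above lower bound on the integral applies, which yields
\[
\Gamma_q(f)(x)\ge q(q-1)\sum_{\{y\in\Z:\,f(x)>f(y)\}}K_s(x-y)\bigl(f(x)-f(y)\bigr)^2\cdot\tfrac12=\tfrac{q(q-1)}{2}|\widetilde{\nabla}f|^2(x),
\]
as $f\ge 0$ means $|f(x)|>|f(y)|$ and $f(x)>f(y)$ coincide. The nonnegativity of $|\widetilde{\nabla}f|^2$ is immediate from the definition.

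For \eqref{diff-bd}, I would split into cases according to whether $f(x)\ge f(x+1)$ or $f(x+1)>f(x)$. In the first case I keep only the single term $y=x+1$ in $\Gamma_q(f)(x)$; the above integral estimate (with the role of $y$ played by $x+1$) together with $K_s(1)>0$ gives
\[
\Gamma_q(f)(x)\ge\tfrac{q(q-1)}{2}K_s(1)\bigl(f(x)-f(x+1)\bigr)^2=\tfrac{q(q-1)}{2}K_s(1)|\D f|^2(x).
\]
In the second case I do the same with $\Gamma_q(f)(x+1)$, keeping the term $y=x$. In either case, bounding $\Gamma_q(f)(x)$ or $\Gamma_q(f)(x+1)$ by their sum yields \eqref{diff-bd} with $c_q=2/(q(q-1)K_s(1))$.

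I do not anticipate any real obstacle: the only subtlety is checking that the integral bound $\int_0^1\frac{(1-u)f(x)^{2-q}}{[(1-u)f(x)+uf(y)]^{2-q}}\,\d u\ge\tfrac12$ indeed requires $f(x)\ge f(y)$ (not the reverse), which explains why $|\widetilde\nabla f|$ uses the asymmetric index set and why the difference bound needs the freedom to choose between $\Gamma_q(f)(x)$ and $\Gamma_q(f)(x+1)$. The case $f(x)=f(y)=0$ is harmless since then $(f(x)-f(y))^2=0$ kills the term.
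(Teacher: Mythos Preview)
Your proposal is correct and follows essentially the same route as the paper's own proof: both parts rest on the lower bound $\int_0^1\frac{(1-u)f(x)^{2-q}}{[(1-u)f(x)+uf(y)]^{2-q}}\,\d u\ge\tfrac12$ whenever $f(x)\ge f(y)$, applied first to all $y$ with $f(x)\ge f(y)$ for \eqref{gamma-bd}, and then to the single neighbour term (at $x$ or at $x+1$, according to which value is larger) for \eqref{diff-bd}. Your version is slightly more explicit in naming the constant $c_q=2/(q(q-1)K_s(1))$, whereas the paper appeals to \eqref{kernel-bd} and leaves the constant implicit, but the arguments are otherwise identical.
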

\begin{proof} We divide the proof into two parts. Let $ 0\leq f\in l^1$ and $x\in \mathbb{Z}$.

(i) \underline{Proof of \eqref{gamma-bd}}. For $f(x)\geq f(y)$, we have $(1-u)f(x)+uf(y)\leq f(x),\ u\in[0,1].$ Then
$$ \int_{0}^{1} \frac{(1-u)f(x)^{2-q}}{((1-u)f(x)+uf(y))^{2-q} }\,\d u\geq \int_{0}^{1} \frac{(1-u)f(x)^{2-q}}{f(x)^{2-q}}\, \d u= \frac{1}{2}, $$
and hence, according to Lemma \ref{LE4.3},
\begin{eqnarray*}
\Gamma_{q}(f)(x)&\geq&
\frac{q(q-1)}{2}  \sum_{\{y\in\Z:\ f(x)\geq f(y)\}}K_s(x-y)(f(x)-f(y))^{2}\\
&=& \frac{q(q-1)}{2} |\widetilde{\nabla} f|^{2}(x).
	\end{eqnarray*}

(ii) \underline{Proof of \eqref{diff-bd}}. If $f(x+1)\geq f(x)$, then $ (1-u)f(x)+uf(x)\leq f(x+1)$ for every $u\in[0,1]$, and hence,
$$\int_{0}^{1} \frac{(1-u)f(x+1)^{2-q}}{[(1-u)f(x+1)+uf(x)]^{2-q}}\, \d u \geq \int_{0}^{1} (1-u)\,\d u =\frac{1}{2}.$$
Thus, by \eqref{kernel-bd}, we obtain
\begin{eqnarray}\label{diff-bd-1}
|\D f|^2(x)&=&[f(x+1)-f(x)]^{2}\cr
 & \leq& 2[f(x+1)-f(x)]^{2} \int_{0}^{1} \frac{(1-u)f(x+1)^{2-q}}{[(1-u)f(x+1)+u f(x)]^{2-q}}\, \d u \cr
& \leq&  c_{q}' \Gamma_q(f)(x+1),
\end{eqnarray}
for some constant $c_q'>0$.

If $f(x)\geq f(x+1)$,  then $ (1-u)f(x+1)+uf(x)\leq f(x)$ for every $u\in[0,1]$, and hence,
$$\int_{0}^{1} \frac{(1-u)f(x)^{2-q}}{[(1-u)f(x)+uf(x+1)]^{2-q}}\,\d u \geq \int_{0}^{1} (1-u)\, \d u =\frac{1}{2}.$$
Thus, by \eqref{kernel-bd} again, we get
\begin{eqnarray}\label{diff-bd-2}
|\D f|^2(x)&=&(f(x+1)-f(x))^{2}\cr
 & \leq& 2(f(x+1)-f(x))^{2} \int_{0}^{1} \frac{(1-u)f(x)^{2-q}}{[(1-u) f(x)+u f(x+1)]^{2-q}}\, \d u \cr
& \leq&  c_{q}'' \Gamma_q(f)(x),
\end{eqnarray}
for some constant $c_q''>0$.

Putting \eqref{diff-bd-1} and \eqref{diff-bd-2} together, we finally have
$$|\D f|^2 (x)\leq c_q \big(\Gamma_q(f)(x+1)+\Gamma_q(f)(x)\big),\quad x\in \mathbb{Z},$$
for some constant $c_q>0$.
\end{proof}

Now we are ready to prove the Littlewood--Paley--Stein estimate for $\widetilde{G}$ and $H$.
\begin{proof}[Proof of Theorem \ref{main} on $\widetilde{G}$ and $H$]
Let $q\in(1,2]$. Since $l^1$ is dense in $l^q$, by standard approximation, it suffices to assume $f\in l^1$.

(1) \underline{\emph{Boundedness for $\widetilde{G}$}}.  Similar as the proof of \cite[Proposition 2.6]{LiWang16}, we may assume that $0\leq f\in l^1$. Applying \eqref{gamma-bd} and Proposition \ref{LE4.2}, we deduce that
\begin{eqnarray*}
		\Vert \widetilde{G} (f)(x)\Vert_{q}^{q}&=&\sum_{x\in \mathbb{Z}}\left(\int_{0}^{\infty}  |\widetilde{\nabla} P_tf(x)|^{2}\, \d t\right)^{q/2}\\
		&\leq& \sum_{x\in \mathbb{Z}} \left(\frac{2}{q(q-1)} \int_{0}^{\infty} \Gamma_{q} (P_{t} f)(y)\,\d t\right)^{q/2}\\
		&\leq& c_1 \sum_{x\in \mathbb{Z}} \big( \mathcal{H}_{q} f(x) \big)^{q}\\
&\leq&c_2\|f\|_q^q, \quad 0\leq f\in l^1,
\end{eqnarray*}
for some constants $c_1,c_2>0$ depending only on $q$, which finish the proof of boundedness for $\widetilde{G}$ in $l^q$.

(2) \underline{\emph{Boundedness for $H$}}. By the sublinear property of $H$, it is enough to prove the case when $0\leq f\in l^1$.
Since $q\in(1,2]$, by \eqref{diff-bd} and Proposition \ref{LE4.2}, we have
\begin{eqnarray*}
\Vert H(f)\Vert_{q}^{q}&=&
\sum_{x\in \mathbb{Z}} \left(\int_{0}^{\infty} | \D P_tf| ^{2}(x)\, \d t\right)^{\frac{q}{2}}\\
&\leq& c_3 \sum_{x\in \mathbb{Z}} \left(\int_{0}^{\infty} \big[\Gamma_q ( P_tf)(x+1)+\Gamma_q ( P_t f)(x)\big]\, \d t \right) ^{\frac{q}{2}}\\
&\leq& c_3\sum_{x\in \mathbb{Z}} \left[\left(\int_{0}^{\infty} \Gamma_q (P_tf)(x+1)\, \d t \right) ^{\frac{q}{2}}+\left(\int_{0}^{\infty} \Gamma_q (P_tf)(x)\, \d t \right) ^{\frac{q}{2}}\right]\\
&\leq& c_4  \Vert f \Vert_q^q, \quad 0\leq f\in l^1,
\end{eqnarray*}
for some constants $c_3,c_4>0$ depending only on $q$, which completes the proof of boundedness for $H$ in $l^q$.
\end{proof}

Now we turn to consider the boundedness of $\widetilde{G}_{\U}$. The idea of proof is similar to the null potential case tackled above. So we give a brief description on the proof with necessary adaption.

Let $q\in(1,2]$. We introduce the pseudo-gradient associated with $L_\U=(-\Delta)^s+\U$. For any suitable function $f\geq0$, let
$$\Gamma_{q,\U} (f)=qfL_{\U} f-f^{2-q}L_{\U} f^{q}.$$

Let $P_t^\U=e^{-tL_\U}$, $t\geq0$. For any $f\in l^q$, define the semigroup maximal function $f^*_{\U}$ by $$f^*_{\U}=\sup_{t>0}|P_{t}^{\U}f|.$$ Since $(P_{t}^{\U})_{t\geq0}$ is a contractive, symmetric sub-Markovian semigroup, we also have
$$\Vert f^*_{\U}\Vert_q \leq c_q \Vert f\Vert_q,\quad f\in l^q,$$
for some constant $c_q>0$ depending only on $q$.

Consider
$$(\mathcal{H}_{q,\U}f)(x)=\left(\int_{0}^{\infty} \Gamma_{q,\U}(P_{t}^{\U}f)(x)\,\d t\right)^{\frac{1}{2}},\quad x\in\Z,\,0\leq f\in l^1.$$
Then, similar as the proof of Proposition \ref{LE4.2}, there exists a constant $c_{q}>0$, depending only on $q$, such that
\begin{eqnarray}\label{HV-bd}
\Vert \mathcal{H}_{q,\U} f\Vert_{q}\leq c_{q}\Vert f\Vert_{q},\quad 0\leq f\in l^1.
\end{eqnarray}

Now we begin the proof of Corollary \ref{schro}.
\begin{proof}[Proof of Corollary \ref{schro}]
Let $q\in (1,2]$. By standard approximation, it suffices to assume that $f\in l^1$. Due to the lack of sublinear property of $ \widetilde{G}_{\U}$, we may consider $|f|$ instead of $f$ as the proof presented in \cite[Section 3]{Li-2020}. So we assume in addition that $f\geq0$. By the nonnegativity of $\U$ and \eqref{gamma-bd}, we have
\begin{eqnarray*}
		0&\leq& |\widetilde{\nabla} f|^{2}(x)+\U(x)f^{2}(x)\\
&\leq&\frac{2}{q(q-1)}\Gamma_{q}(f)(x)+\U(x) f^2(x)\\
		&\leq&\frac{2}{q(q-1)}\Gamma_{q,\U} (f)(x),\quad x\in \mathbb{Z}.
\end{eqnarray*}
Combining this with \eqref{HV-bd}, we derive that
\begin{eqnarray*}
		\Vert \widetilde{G}_{\U} (f)\Vert_{q}^{q}&=&\sum_{x\in \mathbb{Z}}\left(\int_{0}^{\infty}\big( | \widetilde{\nabla}P_t^\U f|^{2}(x)+|\sqrt{\U}P_t^\U  f|^2(x)\big)\,\d t\right)^{q/2}\\
		&\leq& \sum_{x\in \mathbb{Z}} \left(\frac{2}{q(q-1)} \int_{0}^{\infty} \Gamma_{q,\U} (P_{t}^{\U} f)(x)\,\d t\right)^{q/2}\\
		&=& \left(\frac{2}{q(q-1)}\right)^{q/2} \sum_{x\in \mathbb{Z}} \big( \mathcal{H}_{q,\U} f \big)^{q}(x)\\
&\leq& c_q \Vert f \Vert_q^q,\quad 0\leq f\in l^1,
\end{eqnarray*}
for some constant $c_q>0$ depending only on $q$.
\end{proof}

\section{The case $2\leq q<\infty$}\hskip\parindent
In this section, we turn to prove the Littlewood--Paley--Stein estimate for $G$ in $l^q$ for all $q\in[2,\infty)$. The idea of proof is motivated by \cite{Ba-2016} for $\R^d$-valued L\'{e}vy processes (see also the recent \cite{LZ2022+} where the Markov jump process allows to be not a L\'{e}vy process). Refer to \cite{Sato2013,Applebaum} for more details on L\'{e}vy processes.

Let $(X_t)_{t\geq0}$ be the discrete Markov process generated by $-L$ with state space $\Z$. Denote by $(\mathcal{F}_t)_{t\geq0}$ the natural filtration of the process $(X_t)_{t\geq0}$. Fix $f\in l^1$ and $T>0$. Consider
$$M_{t}:=P_{T-t} f\left(X_{t}\right)-P_{T} f(X_0), \quad 0\leq t\leq T.$$

Then, as in \cite{Ba-2016} and \cite{Li-2020}, we have the following result.  For each $x\in\Z$, let $\E_x$ be the expectation of the process $(X_t)_{t\geq0}$ with initial distribution $\delta_x$, where  $\delta_\cdot$ stands for the Dirac measure.
\begin{lemma}\label{pre-var}
$(M_t, \mathcal{F}_t)_{0\leq t\leq T}$ is a square integrable martingale with $M_0=0$, and
$$\langle M\rangle_{t}=\int_{0}^{t} \sum_{y \in \mathbb{Z}} \big[P_{T-s} f(X_{s}+y)-P_{T-s} f(X_{s})\big]^{2} K_s(y)\, \d s,\quad t\in[0,T],$$
where $\langle M\rangle_{t}$ stands for the predictable quadratic variation of $M_t$.
\end{lemma}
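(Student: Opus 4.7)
The plan is to identify $M_t$ as the martingale arising from Dynkin's formula for the space-time function $u(s,x):=P_{T-s}f(x)$. The generator of $(X_t)_{t\ge 0}$ is $\L:=-L$, which, using the symmetry $K_s(-y)=K_s(y)$, acts as $\L\phi(x)=\sum_{y\in\Z}[\phi(x+y)-\phi(x)]K_s(y)$. Since $P_t=e^{-tL}$, the function $u$ satisfies the backward equation $\partial_s u=Lu$, i.e.\ $(\partial_s+\L)u\equiv 0$. The standard Dynkin decomposition for time-dependent functionals of a Markov jump process then yields
$$M_t=u(t,X_t)-u(0,X_0)=\int_0^t(\partial_s+\L)u(s,X_s)\,\d s+\widetilde M_t=\widetilde M_t,$$
where $\widetilde M_t$ is the purely discontinuous compensating martingale associated with the jumps of $s\mapsto u(s,X_s)$. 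In particular $M_0=0$ and the martingale property is immediate from the decomposition.

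Before invoking the decomposition I would verify the integrability that makes everything meaningful. Since $f\in l^1$, we have $|f(x)|\le\|f\|_1$ for every $x\in\Z$, and by $l^\infty$-contractivity of $P_t$ this gives $\|u(s,\cdot)\|_\infty\le\|f\|_1$ uniformly for $s\in[0,T]$. Hence $|M_t|\le 2\|f\|_1$ is bounded, so trivially square integrable. For the same reason, together with the fact that the tail bound \eqref{kernel-bd} yields $\|K_s\|_{l^1}<\infty$, the sum $\sum_{y\in\Z}[u(s,x+y)-u(s,x)]^2K_s(y)$ is dominated uniformly by $4\|f\|_1^2\|K_s\|_{l^1}$. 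This is the step at which local infiniteness of the graph (Remark \ref{rem-graph}) could have caused trouble; the summability of $K_s$ is what saves us and legitimizes the L\'evy-type compensator computation.

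For the predictable quadratic variation, I would use the carr\'e-du-champ identity associated with the pure-jump generator $\L$,
$$\Gamma(\phi,\phi)(x)=\L(\phi^2)(x)-2\phi(x)\L\phi(x)=\sum_{y\in\Z}[\phi(x+y)-\phi(x)]^2K_s(y),$$
combined with the standard formula $\langle\widetilde M\rangle_t=\int_0^t\Gamma(u(s,\cdot),u(s,\cdot))(X_s)\,\d s$ for the martingale part of Dynkin's decomposition applied to a time-dependent function. Substituting $u(s,\cdot)=P_{T-s}f$ and relabelling the summation variable gives exactly the stated expression. The main technical point is justifying the Dynkin/compensator decomposition when the L\'evy kernel has unbounded support, but the uniform estimate $\|u(s,\cdot)\|_\infty\le\|f\|_1$ together with $\|K_s\|_{l^1}<\infty$ reduces the situation to the classical finite-activity case, so no genuinely new obstacle appears.
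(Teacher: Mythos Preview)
Your argument is correct, and the underlying ingredients (the backward equation $(\partial_s+\L)u=0$ and the carr\'e-du-champ identity $\Gamma(\phi,\phi)=|\nabla\phi|^2$) are exactly the ones the paper uses as well. The packaging, however, differs. The paper does not invoke Dynkin's decomposition or a ready-made formula for $\langle\widetilde M\rangle_t$; instead it computes $\E_x(M_t^2)=P_t(P_{T-t}f)^2(x)-(P_Tf)^2(x)$ directly, differentiates $u\mapsto P_u(P_{T-u}f)^2(x)$ to obtain
\[
\E_x(M_t^2-M_s^2)=\int_s^t P_u\big(-L(P_{T-u}f)^2+2P_{T-u}f\,LP_{T-u}f\big)(x)\,\d u=\E_x\!\int_s^t|\nabla P_{T-u}f|^2(X_u)\,\d u,
\]
and concludes from this that $M_t^2-\int_0^t|\nabla P_{T-s}f|^2(X_s)\,\d s$ is a martingale. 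Your route is more conceptual and leans on the general theory of Markov jump processes; the paper's is more elementary and self-contained, needing only the semigroup calculus and the definition of $\langle M\rangle$. Both yield the same formula with the same carr\'e-du-champ appearing in the middle, and your observation that $\|K_s\|_{l^1}<\infty$ keeps everything finite-activity is the right justification for the abstract machinery you cite.
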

\begin{proof} The first assertion is clear. Let $x\in \mathbb{Z}$ and $0\le t\le T$. Then
\begin{eqnarray*} \mathbb{E}_{x}\left(M_{t}^{2}\right) &=&\mathbb{E}_{x}\big[\big(P_{T-t} f\left(X_{t}\right)-P_{T} f\left(X_{0}\right)\big)^{2}\big] \\ &=&\mathbb{E}_{x}\big[(P_{T-t} f)^{2}(X_{t})\big]-2 P_{T} f(x) \mathbb{E}_{x}\big[ P_{T-t} f(X_{t})\big]+\left(P_{T} f\right)^{2}(x) \\ &=&P_{t}\left(P_{T-t} f\right)^{2}(x)-\left(P_{T} f\right)^{2}(x).
\end{eqnarray*}
 Hence, for any $0\leq s<t\leq T$, we have
\begin{equation*}\label{eq1}
	\begin{split}
\mathbb{E}_{x}(M_{t}^{2}-M_{s}^{2}) &=P_{t}(P_{T-t} f)^{2}(x)-P_{s}(P_{T-s} f)^{2}(x)\\
		&=\int_{s}^{t} \frac{\d}{\d u}P_{u}(P_{T-u} f)^{2}(x)\, \d u\\
		 &=\int_{s}^{t}\big(-LP_{u}(P_{T-u} f)^{2}(x)+P_{u}[2 P_{T-u} f (L P_{T-u} f)](x)\big)\,\d u\\
		 &=\int_{s}^{t} P_{u}\Big(-L(P_{T-u} f)^{2}+2 P_{T-u} f  L P_{T-u} f\Big)(x)\, \d u\\
		 &= \int_{s}^{t} P_{u} |\nabla P_{T-u} f (x)|^{2} \,\d u\\
		 &= \mathbb{E}_{x}\left(\int_{s}^{t} |\nabla  P_{T-u} f(X_{u})|^{2}\,\d u\right).\end{split}	
\end{equation*}
Thus, $\big\{M_{t}^2-\int_{0}^{t} |\nabla P_{T-s} f\left(X_{s}\right)|^2 \,\d s, \mathcal{F}_t\big\}_{t\ge 0}$ is a martingale, and
\begin{equation*}\label{eq2}\begin{split}
	\langle M\rangle_{t}
	&=\int_{0}^{t} |\nabla P_{T-s} f(X_{s})|^2\, \d s\\
	&=\int_{0}^{t} \sum_{y \in \mathbb{Z}} [P_{T-s} f(X_{s}+y)-P_{T-s} f(X_{s})]^{2} K_s(y)\, \d s .
\end{split}\end{equation*}\end{proof}

Let $(p_t)_{t>0}$ be the heat kernel of $(P_t)_{t>0}$. For every $f\in l^1$ and every $x\in\Z$, set
$$G_{*}(f)(x):=\left(\int_{0}^{\infty} \sum_{y,z\in \mathbb{Z}} |P_{t} f(z+y)-P_{t} f(z)|^{2} p_{t}(x, z)  K_s( y)\, \d t\right)^{1 / 2},$$
and
$$G_{*,T}(f)(x):=\left(\int_{0}^{T} \sum_{y,z\in \mathbb{Z}} |P_{t} f(z+y)-P_{t} f(z)|^{2} p_{t}(x, z)   K_s(y)\, \d t\right)^{1 / 2}.$$
Note that, for every $x\in\Z$, as $T \rightarrow \infty$, we have $G_{*, T}(f)(x)$ increases to $G_{*}(f)(x).$

Indeed, we have following crucial formula for $G_{*, T}(f)$ which,  loosely speaking,  expresses $G_{*, T}(f)$ as the conditional expectation of the predictable quadratic variation of the martingale $M_t$ introduced above.
\begin{lemma}\label{mart-rep}
Let $T>0$. For every $f\in l^1$,
$$
G_{*, T}(f)^2(x)=\sum_{z\in \mathbb{Z}} \mathbb{E}_{z}\big(\langle M\rangle_{T}|X_{T}=x\big) p_{T}(z,x) ,\quad x\in \Z. $$
\end{lemma}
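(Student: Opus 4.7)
The plan is to unwind the right-hand side via the definition of conditional expectation, substitute in the explicit expression for $\langle M\rangle_T$ given by Lemma \ref{pre-var}, and then reduce to $G_{*,T}(f)^2(x)$ using the Markov property together with the symmetry of the heat kernel $p_t$.

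First, I would observe that, since under $\mathbb{P}_z$ we have $\mathbb{P}_z(X_T=x)=p_T(z,x)$, the standard identity for conditional expectation gives
$$\sum_{z\in\Z}\mathbb{E}_z\big(\langle M\rangle_T\,\big|\,X_T=x\big)p_T(z,x)=\sum_{z\in\Z}\mathbb{E}_z\big(\langle M\rangle_T\,\ch_{\{X_T=x\}}\big).$$
Next, substituting the expression for $\langle M\rangle_T$ from Lemma \ref{pre-var} and applying Fubini, the right-hand side equals
$$\int_{0}^{T}\sum_{z\in\Z}\mathbb{E}_z\!\left(\sum_{y\in\Z}\big[P_{T-s}f(X_s+y)-P_{T-s}f(X_s)\big]^{2}K_s(y)\cdot\ch_{\{X_T=x\}}\right)\d s.$$

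Next I would apply the Markov property at time $s$: conditional on $X_s=w$, the indicator $\ch_{\{X_T=x\}}$ has expectation $p_{T-s}(w,x)$. Consequently, for each fixed $s\in[0,T]$,
$$\sum_{z\in\Z}\mathbb{E}_z\Big(g_s(X_s)\ch_{\{X_T=x\}}\Big)=\sum_{z\in\Z}\sum_{w\in\Z}p_s(z,w)\,g_s(w)\,p_{T-s}(w,x),$$
where $g_s(w):=\sum_{y\in\Z}[P_{T-s}f(w+y)-P_{T-s}f(w)]^2K_s(y)$. Since the transition kernel is symmetric (the chain is reversible with respect to the counting measure because $K_s$ is symmetric), one has $\sum_{z\in\Z}p_s(z,w)=\sum_{z\in\Z}p_s(w,z)=1$, which collapses the inner sum to $\sum_{w\in\Z}g_s(w)p_{T-s}(w,x)$. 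Using symmetry once more, $p_{T-s}(w,x)=p_{T-s}(x,w)$.

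Finally, the substitution $t=T-s$ and a relabelling $w\to z$ transforms the remaining expression into
$$\int_{0}^{T}\sum_{z,y\in\Z}\big[P_{t}f(z+y)-P_{t}f(z)\big]^{2}p_{t}(x,z)K_s(y)\,\d t=G_{*,T}(f)^{2}(x),$$
which is the desired equality. The only step that requires some care is the interchange of summation in $z$ with the expectation and with the ``$s$'' integral; the absolute convergence needed to justify Fubini follows because $f\in l^1$ implies $P_{T-s}f$ is uniformly bounded in $s$, so $g_s(w)$ is dominated by a constant multiple of $\|K_s\|_{l^1}<\infty$, and the sum against the probability kernel $p_s(z,w)$ over a finite time interval is finite — this is the main (mild) technical obstacle but is otherwise routine.
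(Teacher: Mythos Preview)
Your proof is correct and follows essentially the same approach as the paper: both arguments substitute the expression for $\langle M\rangle_T$ from Lemma~\ref{pre-var}, use the Markov property to factor the joint law of $(X_s,X_T)$, sum out the starting point $z$ via $\sum_z p_s(z,w)=1$, perform the change of variables $t=T-s$, and invoke the symmetry $p_t(w,x)=p_t(x,w)$. The only cosmetic difference is that you first rewrite $\mathbb{E}_z(\langle M\rangle_T\,|\,X_T=x)\,p_T(z,x)=\mathbb{E}_z(\langle M\rangle_T\,\ch_{\{X_T=x\}})$, whereas the paper directly inserts the bridge density $p_s(z,w)p_{T-s}(w,x)/p_T(z,x)$ and cancels the $p_T(z,x)$.
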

\begin{proof} Indeed, by Lemma \ref{pre-var}, we have
\begin{equation*}\begin{split}
&\sum_{z\in \mathbb{Z}}\mathbb{E}_{z}\big(\langle M\rangle_{T}|X_{T}=x\big) p_{T}(z,x) \\
	&=\sum_{z\in \mathbb{Z}}\mathbb{E}_{z}\left(\int_{0}^{T} \sum_{y\in \mathbb{Z}}\left|P_{T-s} f\left(X_{s}+y\right)-P_{T-s} f\left(X_{s}\right)\right|^{2} K_s(y)\, \d s \bigg|X_{T}=x\right) p_{T}(z,x) \\
	&=\sum_{z\in \mathbb{Z}}\left(\sum_{w\in \mathbb{Z}}\frac{p_{s}(z,w)p_{T-s}(w,x)}{p_{T}(z,x)}\int_{0}^{T}\sum_{y\in \mathbb{Z}}|P_{T-s} f(w+y)-P_{T-s} f(w)|^2K_s(y)\,\d s\right) p_{T}(z,x) \\
	&=\int_{0}^{T} \sum_{w\in \mathbb{Z}} \sum_{y\in \mathbb{Z}}  |P_{s} f(w+y)-P_{s} f(w)|^{2}p_{s}(w,x)K_s(y)\,\d s\\
	&=G_{*,T}(f)^{2}(x),\quad x\in\Z.
\end{split}\end{equation*}
\end{proof}

Now we are ready to complete the proof of  Theorem \ref{main}.
\begin{proof}[Proof of Theorem \ref{main} on $G$]
Let $q\in[2,\infty)$, $T>0$ and $f\in l^1$. Clearly, $\Vert \widetilde{G} \Vert_q$ and $\Vert H \Vert_q$ is bounded by $\Vert G \Vert_q$. Hence, it suffices to prove that $G$ is bounded in $l^q$.

Denote the quadratic variation of $M$ by $[M]$. Then, by \cite[Remark 11.5.8]{cohen} (see also Lemma 6 on page 75 of \cite{LipShi1989} for the general case where $T$ is a stopping time),  there exists some constant $C_q>0$ depending only on $q$ such that
\begin{equation}\label{mart-ineq}
\E\left(\langle M \rangle _T^{\frac{q}{2}}\right) \leq C_q  \E\left([M]_T^{\frac{q}{2}}\right).
\end{equation}
According to Lemma \ref{mart-rep}, Jensen's inequality, \eqref{mart-ineq} and the Burkholder--Davis--Gundy inequality (see e.g. \cite[Theorem 11.5.5]{cohen}), we obtain
\begin{eqnarray*}
	\sum_{x\in \mathbb{Z}} G_{*,T}(f)^q(x)
	&=&\sum_{x\in \mathbb{Z}} \left( \sum_{z\in \mathbb{Z}} \mathbb{E}_{z}(\langle M \rangle_{T}| X_T=x)p_{T}(z,x)\right)^\frac{q}{2}\\
	&\le&\sum_{x\in \mathbb{Z}} \sum_{z\in \mathbb{Z}} \mathbb{E}_{z}\left(\langle M \rangle_{T}^{\frac{q}{2}}| X_T=x\right) p_{T}(z,x)\\
	&=&\sum_{z\in \mathbb{Z}} \mathbb{E}_{z}\left(\langle M \rangle_{T}^{\frac{q}{2}}\right)\\
&\leq& C_q\sum_{z\in \mathbb{Z}} \mathbb{E}_{z}\left( [M]_{T}^{\frac{q}{2}}\right)\\
	&\le& C_{q}'\sum_{z\in \mathbb{Z}} \mathbb{E}_{z}\big(|M_{T}|^q\big) \\
	&\le& C_{q}''\sum_{z\in \mathbb{Z}} \big( \mathbb{E}_{z}|f(X_{T})|^{q}+P_{T}|f|^q(z)\big)\\
	&\leq& 2C_{q}''\sum_{z\in \mathbb{Z}}|f(z)|^q,
\end{eqnarray*}
where in the last two inequalities we applied the elementary inequality, i.e., $(a+b)^q\leq C'''_q(a^q+b^q)$ for every $a,b\geq0$, and the contraction property of $P_T$ in $l^q$, where $C'_q,C''_q, C'''_q$ are positive constants depending only on $q$. Taking $T\rightarrow\infty$, by the monotone convergence theorem, we have
$$\sum_{x\in \mathbb{Z}} G_{*}(f)^q(x) \le 2C_{q}''\sum_{x\in \mathbb{Z}}|f(x)|^q.$$

We claim that $G (f)(x)\le \sqrt{2} G_{*}(f)(x)$, $x\in \mathbb{Z}.$ Indeed,
\begin{equation*}\begin{split}
	G^{2} (f)(x)&=\int_{0}^{\infty}\sum_{y\in \mathbb{Z}}|P_{t} f(x+y)-P_{t} f(x)|^{2}K_s(y)\,\d t\\
	&\le \int_{0}^{\infty}\sum_{y\in \mathbb{Z}} P_{\frac{t}{2}} \big|P_{\frac{t}{2}} f(x+y)-P_{\frac{t}{2}} f(x)\big|^{2} K_s(y)\,\d t\\ &=\int_{0}^{\infty}\sum_{z\in \mathbb{Z}}\sum_{y\in \mathbb{Z}}\big|P_{\frac{t}{2}} f(z+y)-P_{\frac{t}{2}} f(z)\big|^{2} p_{\frac{t}{2}}(x,z) K_s(y)\,\d t \\
	&=2\int_{0}^{\infty}\sum_{z\in \mathbb{Z}} \sum_{y\in \mathbb{Z}} |P_{t} f(z+y)-P_{t} f(z)|^{2} p_{t}(x,z)K_s(y)\,\d t \\
	&=2G_{*}^{2} (f)(x),\quad x\in\Z,
\end{split}\end{equation*} where the inequality is due to that
\begin{eqnarray*}
&&|P_t(x+y)-P_tf(x)|=|P_{t/2}[P_{t/2}f(\cdot+y)](x)-P_{t/2}[P_{t/2}f](x)|\\
&&=|P_{t/2}\{P_{t/2}f(\cdot+y)-P_{t/2}f(\cdot)\}(x)|\leq P_{t/2}|P_{t/2}f(\cdot+y)-P_{t/2}f(\cdot)|(x).
\end{eqnarray*}

Thus, we arrive at
$$\Vert G(f) \Vert_{q}\le c_q\Vert  f \Vert_{q},\quad f\in l^1,$$
for some constant $c_q>0$ depending only on $q$.

Finally, for every $f\in l^q$, by the density and Fatou's lemma, we finish the proof of the boundedness of $G$ in $l^q$ for all $q\in[2,\infty)$.
\end{proof}

\subsection*{Acknowledgment}\hskip\parindent
The authors would like to express their appreciation to the anonymous referee for his/her in-depth comments, suggestions and corrections, which have greatly improved the manuscript. The second named author would like to thank  Prof. Jianhai Bao and Prof. Feng-Yu Wang for helpful comments.

\subsection*{Declarations}
\textbf{Conflict of interest} The authors declare that they have no conflict of interests or personal
relationships that could have appeared to influence the work reported in the present paper.

\end{document}